\newcommand{\keywords}[1]{%
  \vspace{0.5em}
  \noindent\textbf{Keywords: }#1
}
\newtheorem{theorem}{Theorem}
\newtheorem{conjecture}{Conjecture}
\newtheorem{remark}{Remark}%
\lstdefinestyle{mypython}{
    language=Python,
    basicstyle=\ttfamily\footnotesize,
    keywordstyle=\color{blue}\bfseries,
    stringstyle=\color{orange},
    commentstyle=\color{gray},
    showstringspaces=false,
    breaklines=true,
    frame=single
}
\lstdefinelanguage{Mathematica}{
  morekeywords={
    Module, If, Then, Else, Return, Print, Set, SetDelayed, Range,
    IntegerQ, Select, Flatten, Import, FileNames, MatchQ, Min, Max,
    ToExpression, Length, Complement, ParallelMap
  },
  sensitive=true,
  morecomment=[l](*),
  morestring=[b]",
}
\title{Exact Polynomial Families Solving the Erdős–Straus Equation}
\author{
Bilal Ghermoul\thanks{Department of Mathematics, Faculty of Mathematics and Computer Science, University Mohamed El Bachir El Ibrahimi  of Bordj Bou Arreridj,
     El-Anasser 34030, Algeria. \textbf{Email: }\texttt{bilal.ghermoul@univ-bba.dz}}
}
\date{\today} % or leave empty if not needed
\begin{document}

\maketitle

\begin{abstract}
The Erd\H{o}s--Straus conjecture, proposed in 1948 by Paul Erd\H{o}s and Ernst G. Straus, 
asks whether the Diophantine equation
\[
\frac{4}{a} = \frac{1}{b} + \frac{1}{c} + \frac{1}{d}
\]
admits positive integer solutions \(b, c, d \in \mathbb{N}^*\) for every integer \(a \geq 2\). 
While the conjecture has been confirmed for all even integers and for all integers congruent 
to \(3 \pmod{4}\), the case \(a \equiv 1 \pmod{4}\) remains the central open challenge.

In this work, we construct four explicit unbounded multivariable polynomials 
\(p_1(x,y,z), p_2(x,y,z), p_3(x,y,z), p_4(x,y,z)\) with \(x, y, z \geq 1\), such that 
each of the first three — when inserted into the form \(a = 4p_i(x,y,z)+1\) — 
always produces values of \(a\) for which the Erd\H{o}s--Straus equation admits an explicit solution. 
Thus, the first three polynomials individually satisfy the conjecture for all their outputs.

We further conjecture that the values
\[
4p_1(x,y,z)+1,\quad 4p_2(x,y,z)+1,\quad 4p_3(x,y,z)+1,\quad 4p_4(x,y,z)+1
\]
collectively cover all integers congruent to \(1 \pmod{4}\).

Extensive computational verification up to \(q = 10^9\) confirms that every integer of the form \(4q+1\) 
within this range arises from at least one of these families. One of the polynomials alone generates 
all such prime values up to at least \(1.2 \times 10^{10}\). These results offer strong computational 
evidence and explicit constructions relevant to the resolution of the conjecture.
\end{abstract}

\keywords{Diophantine equation, Erd\H{o}s--Straus conjecture, Elementary number theory.}

\section{Introduction}

The Erd\H{o}s--Straus conjecture, first proposed in 1948 by Paul Erd\H{o}s and 
Ernst G. Straus~\cite{ErdosStraus}, asserts that for every integer \( a \geq 2 \), 
the rational number \( \frac{4}{a} \) can be written as the sum of three unit 
fractions with positive integer denominators. Formally:

\begin{conjecture}[Erd\H{o}s--Straus]
For every integer \( a > 2 \), there exist distinct positive 
integers \( b, c, d \in \mathbb{N^*} \) such that
\begin{equation} \label{DiaphEqtn}
    \frac{4}{a} = \frac{1}{b} + \frac{1}{c} + \frac{1}{d}.
\end{equation}
\end{conjecture}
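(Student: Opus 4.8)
The statement is the full Erd\H{o}s--Straus conjecture, which is open; accordingly the plan is to reduce it to its genuine core, dispatch everything outside that core by classical means, and then attack the core constructively while making explicit exactly where a complete proof must stall. First I would record the two standard reductions. Because a solution for $a$ descends to every multiple of $a$ — from $\frac{4}{a}=\frac1b+\frac1c+\frac1d$ one obtains $\frac{4}{ma}=\frac{1}{mb}+\frac{1}{mc}+\frac{1}{md}$, preserving distinctness — it suffices to treat $a$ prime. The cases $a$ even and $a\equiv 3\pmod 4$ follow from elementary congruence identities, and a classical refinement (Mordell) shows that among primes $p\equiv 1\pmod 4$ all but the six residue classes
\[
p\equiv 1,\ 11^2,\ 13^2,\ 17^2,\ 19^2,\ 23^2 \pmod{840}
\]
(that is, $1,121,169,289,361,529$) are already settled. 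Thus the entire difficulty lives in these six classes, and any proof must produce solutions there.

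For the constructive step I would fix a solution \emph{ansatz} and force it to hold identically. Writing $a=4p_i(x,y,z)+1$ and clearing denominators, the target equation becomes the polynomial relation
\[
4\,b\,c\,d \;=\; a\,(bc+bd+cd),
\]
so it suffices to exhibit denominators $b(x,y,z),c(x,y,z),d(x,y,z)$ for which this holds as an identity in $\mathbb{Q}[x,y,z]$. In practice one fixes the shape of a solution — typically taking one denominator close to $a/4$, say $b=\tfrac{a+j}{4}$ with $j\equiv 3\pmod 4$ so that $b\in\mathbb{N}$, and requiring the residual $\frac{4}{a}-\frac1b=\frac{j}{ab}$ to split as $\frac1c+\frac1d$ — and then uses the three free parameters $x,y,z$ to encode the divisor choices that make this split algebraic. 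The polynomial $p_i$ is reverse-engineered so that the whole configuration closes up; verification is the purely mechanical check that the displayed relation vanishes identically, together with the observation that $p_i$ is nonconstant with positive growth on $x,y,z\ge 1$ so that its image is infinite. This is exactly the content of the three families asserted as theorems.

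The genuine obstacle is the final covering step: to deduce the conjecture one must show that
\[
\bigcup_{i} \{\,4p_i(x,y,z)+1 : x,y,z\ge 1\,\}
\]
exhausts all of $1\pmod 4$ — equivalently, that every one of the six hard residue classes modulo $840$ is represented by at least one family. I would attempt this through a covering-system argument, decomposing the target residues via the Chinese Remainder Theorem and matching each sub-case to the representable values of some $p_i$. The difficulty is that deciding which integers a multivariate polynomial represents is in general intractable, the images of the $p_i$ are sparse and arithmetically rigid, and there is no surjectivity criterion to invoke; ruling out a single escaping integer across all four families seems out of reach. This is precisely why the coverage is stated as a conjecture backed by computation to $q=10^9$ rather than as a theorem, and it is the step I expect to block any unconditional argument.
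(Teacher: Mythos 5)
Your proposal is correct in the only sense available for this statement: it is the open Erd\H{o}s--Straus conjecture, the paper itself never proves it unconditionally, and you accurately identify the same three-part strategy the paper follows --- dispatch even $a$ and $a\equiv 3\pmod 4$ by classical identities, parametrize the $a\equiv 1\pmod 4$ cases by polynomial families whose unit-fraction decompositions hold as algebraic identities (the paper's equation \eqref{kzs-decomp} with $q=\kappa z-s$ is exactly the ansatz you describe), and leave the covering claim that the images $4p_i+1$ exhaust $1\pmod 4$ as a computationally supported conjecture (the paper's Conjecture~\ref{conjBilal}, on which its Theorem~\ref{thm-3} is conditioned). You correctly pinpoint the covering step as the irreducible gap blocking any unconditional argument, which is precisely where the paper also stops; your additional appeal to Mordell's residue classes modulo $840$ is a refinement the paper does not use but does not change the essence of the approach.
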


Despite its simple statement, the conjecture remains open in general and continues 
to draw considerable interest in number theory. Numerous computational and theoretical 
efforts have confirmed its validity in many cases. For instance, the conjecture has 
been verified for all integers \( a < 10^{17} \) through large-scale computations, 
notably by Swett and others~\cite{Swett}.

From a theoretical standpoint, significant progress has also been made. 
It is known that the conjecture holds for all even integers \( a \geq 2 \), 
and for all integers congruent to \(3 \pmod{4}\). Furthermore, Elsholtz~\cite{Elsholtz} 
showed that it holds for almost all integers in the natural density sense, 
while Tao~\cite{Tao} later proved that it is satisfied by a density-one subset of 
the positive integers using entropy methods and probabilistic techniques.

A reduction argument narrows the focus of the conjecture to the integers 
congruent to \(1 \pmod{4}\). Indeed, since any composite number \(a = x \cdot y\) 
satisfies a decomposition of the form~\eqref{DiaphEqtn} if at least one of 
\( \frac{4}{x} \) or \( \frac{4}{y} \) does, it suffices to consider prime numbers 
of the form \(4q + 1\), which appear to be the most resistant cases.

A variety of techniques have been applied to explore these cases. 
Classical approaches employ algebraic identities and congruence relations 
to derive explicit solutions for specific classes of integers, particularly 
those of the form \(4q + 1\)~\cite{Rosati, Shirshov}. However, these 
methods have not yet led to a general constructive proof of the conjecture.

\textit{Unsolved Problems in Number Theory} by Guy~\cite{Guy} includes the conjecture 
as a long-standing open problem, underscoring its enduring significance.

In this paper, we construct four explicit multivariable polynomials 
\(p_1\), \(p_2\), \(p_3\), and \(p_4\), defined over 
\((x, y, z) \in \mathbb{N^*}^3\), and show computationally that their images collectively generate all integers \(a\) congruent to \(1 \pmod{4}\) up to \(4 \times (10^9+2) + 1\). Remarkably, the polynomial \(p_2(x, y, z)\) alone is sufficient to generate all prime numbers of the form \(4q + 1\) within this range. For each such value, we verify the existence of explicit solutions to the Erd\H{o}s--Straus equation.

These results provide strong computational evidence in support of the conjecture and introduce a novel framework for understanding the distribution of its solutions. In particular, the approach offers a promising reduction of the conjecture to a finite, verifiable set of cases, and suggests a structured strategy for addressing the remaining open instances. Moreover, it reveals a previously unexplored connection between polynomial parametrizations and unit fraction decompositions.

We therefore conjecture that the union of the images of these polynomials covers all integers \(a \equiv 1 \pmod{4}\), offering a potential path toward a complete resolution. A full \textit{Mathematica} implementation of our method is included to enable independent verification and further exploration.

\section{Known Cases of the Erd\H{o}s--Straus Conjecture}

The conjecture has been resolved for several important classes:

\noindent \textbf{Even integers \( a \equiv 0 \pmod{4} \text{ or } a \equiv 2 \pmod{4} \):} For all even \(a \geq 2\), the identities
\begin{equation}
\label{even}     
\begin{array}{l}
\displaystyle\frac{4}{4\, q}= \frac{1}{q^2+2\, q}+\frac{1}{q^3+3\, q^2+2\, q}+\frac{1}{q+1}, \vspace{0.2cm}\\
\displaystyle \frac{4}{4\, q+2} = \frac{1}{2\, q^2+5\, q+2}+\frac{1}{2\, q^3+7\, q^2+7\, q+2}+\frac{1}{q+1}, \\
\end{array}
\end{equation}
    provide a constructive solution.

\noindent \textbf{Integers \( a \equiv 3 \pmod{4} \):} We have the exact decomposition
\begin{equation}
\label{odd1} 
\frac{4}{4\, q+3} = \frac{1}{4\, q^2+11\, q+6}+\frac{1}{4\, q^3+15\, q^2+17\, q+6}+\frac{1}{q+1}.
\end{equation}

The case of primes \(a \equiv 1 \pmod{4}\) remains the most elusive.

\section{Main results: Polynomial Generation of \(4q+1\) Integers}\label{sec1}

Motivated by the structure of the Erd\H{o}s--Straus equation and the observation 
that it suffices to study integers of the form \(4q+1\), we investigated families 
of multivariable polynomials depending on positive integers \(x,y,z \geq 1\).
Through systematic experimentation and analysis of known solutions, we identified 
four explicit, unbounded polynomials
\[
p_1(x,y,z), \quad p_2(x,y,z), \quad p_3(x,y,z), \quad p_4(x,y,z)
\]
whose images, after the transformation \(4p_i(x,y,z)+1\), collectively cover all 
integers congruent to \(1 \pmod{4}\). This construction provides concrete 
parametrizations of the solutions to the Erd\H{o}s--Straus conjecture in this case. 

We now inrtroduce the explicit forms of these four polynomials.
\begin{align}
p_1(x, y, z) &= x (4 y z-1)-y z, \label{p1} \\
p_2(x, y, z) &= x (4 y z-z-1)-y z, \label{p2} \\
p_3(x, y, z) &= x (8 y-3)-6 y+2, \label{p3} \\
p_4(x, y, z) &= x^2-x. \label{p4}
\end{align}
For convenience, we name the polynomial
$p_i,$ $i\in\{1,2,3,4\},$ the \(i^{\text{th}}\) polynomial.
\subsection{Generation of the polynomials $p_i,$ $i\in\{1,2,3,4\}$}
We now explain how these polynomials were found. For that reason, 
we consider a decomposition for $4q+1,$ where 
\begin{equation}
\label{qValue}
q=\kappa z-s,
\end{equation}
for all $\kappa,z,s \in \mathbb{N}^*$, as follows:
\begin{equation}
\label{kzs-decomp}
\frac{4}{4 (\kappa  z-s)+1}=\frac{1}{\kappa  z \left(\frac{(4 z+1)\kappa}{4 s-1}-1\right)}+\frac{1}{z \left(\frac{(4 z+1)\kappa}{4 s-1}-1\right) (4 (\kappa  z-s)+1)}+\frac{1}{\kappa  z}.
\end{equation}
This decomposition is always true for all $\kappa,z,s \in \mathbb{N}^*$. 

In order for the Erd\H{o}s--Straus conjecture to hold, the denominators of all fractions on the right-hand side of \eqref{kzs-decomp} must be natural numbers, which requires the following conditions to be satisfied:
\begin{equation}
\label{kzs-cond}
\frac{(4 z+1)\kappa z}{4 s-1}=c\in \mathbb{N}^*\setminus\{1\}. 
\end{equation}

\noindent{\bf The origin of the first polynomial $p_1$:} The first polynomial was obtained, for which \eqref{kzs-cond} holds, by assuming 
\begin{equation}
\label{c1}
\frac{\kappa}{4 s-1}=c_1\in \mathbb{N}^*\setminus\{1\}.
\end{equation}
Then $z$ is taken to be an arbitrary positive integer. This means that there exists an integer $\gamma \geq 1 $ such that 
$\kappa=\gamma(4s-1)$, which, from \eqref{qValue}, means that 
$$q=\gamma(4s-1)z-s,$$ 
this leads 
to $p_1$ just by replacing $(s,\gamma,z) \to (x,y,z)$.\\

\noindent{\bf The origin of the second polynomial $p_2$:}
The second polynomial was obtained by requiring that \eqref{kzs-cond} holds, 
and by assuming
\begin{equation}
\label{c2}
\frac{4z+1}{4s-1}=c_2 \in \mathbb{N}^* \setminus \{1\}.
\end{equation}
Then $\kappa$ is an arbitrary positive integer. This means that there exists an integer $\gamma \geq 1$ such that 
$4z+1 = (4\gamma-1)(4s-1)$, which, from \eqref{qValue}, implies that
\[
q = (\gamma (4s-1)-s)\kappa - s.
\]
The polynomial $p_2$ then follows by relabeling the variables 
$(s, \gamma, \kappa)$ as $(x, y, z)$.

In the range $q \in [1,10^6]$, there are 346519 values of $q$ of the form \eqref{qValue} obtained by solving \eqref{c1}. The first few of these values are:
\[
\begin{array}{c}
2, 5, 8, 11, 12, 17, 19, 20, 26, 29, 30, 32, 35, 38, 41, 44, 47, 50,
53, 56, 59, \\ 
62, 65, 68, 71, 74, 77, 80, \ldots
\end{array}
\]
The code used to generate the complete list can be found in Supplementary Section~\ref{SEC.0}.

On the other hand, solving \eqref{c2} yields 646487 values of $q$, the first of which are:
\[
\begin{array}{c}
1, 3, 4, 7, 9, 10, 13, 14, 15, 16, 18, 21, 22, 23, 24, 25, 27, 28,
31, 33, 34, 36, 37,\\
 39, 40, 43, 45, 46, 48, 49, 51, 52, 54, 55, 57,
58, 60, 61, 63, 64, 66, 67, 69, \ldots
\end{array}
\]
For the complete list, see Supplementary Section~\ref{SEC.0}.

All missing numbers in this range are multiples of~6, which verify the condition 
${(4 z+1)\kappa}/{(4 s-1)}=3,$ 
or can be written as the product of two successive integers, namely $x(x-1)$ for $x > 1$. Therefore, we complete the generation of the remaining numbers as follows.\\

\noindent{\bf The origin of the third polynomial $p_3$:} To construct the 
third polynomial, we again impose condition \eqref{kzs-cond} 
and make the assumption
\begin{equation}
\label{c3}
\frac{(4 z+1)\kappa}{4 s-1}=3.
\end{equation}
Consequently, we obtain
\[
s= (x-1) (4 y-1)+y,\quad z = 3 y-1,\quad \text{and} \quad  \kappa = 4 (x-1)+1
\]
Substituting back, from \eqref{qValue}, we obtain 
\[
q = -1 + 2 y + (-1 + x) (-3 + 8 y).
\]
Which is exactly the polynomial $p_3$.

In the range $q \in [1,10^6]$, the polynomial \eqref{qValue}, obtained by solving \eqref{c3},
generates all multiples of~6 except for those that can be written in the form $x(x-1)$.
In total, the polynomial $p_3$ produces 6919 such numbers within this range.
The first few of these values are:
\[
\begin{array}{c}
6, 42, 126, 156, 210, 216, 342, 366, 396, 426, 546, 576, 636, 702, 732, 756,\\ 
786, 816, 930, 966, 996, 1056, \ldots
\end{array}
\]
The complete code used to generate the full list is provided in Supplementary Section~\ref{SEC.0}. \\

\noindent{\bf The origin of the fourth polynomial $p_4$:} Finally, in the range $q \in [1,10^6]$, there remain 75 numbers of which are the form $x(x-1)$, which are given as follows:
\[
\begin{array}{c}
  72, 420, 1332, 1980, 2352, 3192, 4692, 9312, 13110, 14520, 16512,19740,\\ 20880, 24492, 28392,
   31152, 40200, 41820, 46872, 50400, \ldots
\end{array}
\]
For that reason, we introduce polynomial $p_4$. See Supplementary Section~\ref{SEC.0} for the code that produces the complete list.
\begin{remark}
Considering the way we construct the polynomials $p_1$, $p_2$, $p_3$, 
and $p_4$, there appears to be no reason why these polynomials should 
not completely cover all natural numbers. 
Indeed, for any choice of integers $\kappa$, $z$, and $s$ 
satisfying \eqref{c1}, \eqref{c2}, or \eqref{c3}, 
the expression \eqref{qValue} automatically yields 
a positive integer. This means that these polynomials 
continue to generate the natural numbers in a 
predictable manner.

A computational verification using \textit{Mathematica} up to $10^9$ 
suggests that all natural numbers are indeed covered by these polynomials. 
Although this does not amount to a rigorous mathematical proof, it provides 
compelling empirical evidence. Nonetheless, establishing a formal proof 
remains an open problem.
\end{remark}

\begin{conjecture}\label{conjBilal}
Polynomials $p_1,$ $p_2,$ $p_3,$ and $p_4,$ for \( x, y, z \in \mathbb{N}^* \),
generate all natural number $q \geq 1$. 
\end{conjecture}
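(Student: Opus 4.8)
The plan is to translate the additive statement ``$q$ lies in the image of some $p_i$'' into multiplicative statements about $n:=4q+1$, and then to cover every arithmetic type of $n$ using the four families. The first step is to record four elementary identities, each proved by direct expansion from the derivations in Section~\ref{sec1}:
\begin{gather*}
4\,p_1(x,y,z)+1=(4x-1)(4yz-1),\qquad 4\,p_3(x,y,z)+1=(4x-3)(8y-3),\\
4\,p_4(x,y,z)+1=(2x-1)^2,\qquad 4\,p_2(x,y,z)+1=z\bigl((4x-1)(4y-1)-1\bigr)-(4x-1).
\end{gather*}
From these I read off clean descriptions of three of the images: $q\in\operatorname{Im}(p_1)$ exactly when $n$ factors as a product of two integers $\equiv 3\pmod 4$, i.e.\ exactly when $n$ has a prime factor $\equiv 3\pmod 4$; $q\in\operatorname{Im}(p_4)$ exactly when $n$ is a (necessarily odd) perfect square; and, specializing $x=1$, one has $p_3(1,y,z)=2y-1$, so $p_3$ already produces \emph{every} odd $q$.

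These observations reduce Conjecture~\ref{conjBilal} to a single hard core. Given $q\ge 1$, set $n=4q+1$. If $q$ is odd, dispatch it by $p_3$. Otherwise $q$ is even and $n\equiv 1\pmod 8$: if $n$ has a prime factor $\equiv 3\pmod 4$, use $p_1$; if $n$ is a perfect square, use $p_4$. The only survivors are the even $q$ for which $n$ is a non-square whose prime factors are all $\equiv 1\pmod 4$ (in particular every prime $n\equiv 1\pmod 4$ with $q$ even falls here). For these the conjecture demands that $p_2$ do the work.

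The decisive step is therefore to show that every such $n$ lies in $\operatorname{Im}(4p_2+1)$. Writing $a=4x-1$, $b=4y-1$ (so $a,b\equiv 3\pmod 4$, $a,b\ge 3$) and $\kappa=z\ge 1$, the last identity reads $n=\kappa(ab-1)-a$, whence $q\in\operatorname{Im}(p_2)$ if and only if there exist $a,b\equiv 3\pmod 4$ with
\[
(ab-1)\mid(n+a)\qquad\text{and}\qquad ab-1\le n+a .
\]
Thus the problem becomes purely one of divisors: for each remaining $n$, find $a\equiv 3\pmod 4$ such that $n+a$ has a divisor of the prescribed shape $ab-1$ with $b\equiv 3\pmod 4$. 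My plan is to fix a small admissible modulus $a\in\{3,7,11,\dots\}$ and seek $b$ by analysing the divisors of $n+a$ in the relevant residue classes, feeding in sieve- or Dirichlet-type input to guarantee that, as $a$ ranges over the allowed window, at least one valid pair $(a,b)$ always materializes.

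The hard part will be exactly this existence statement, and I expect it to be the genuine obstacle for a structural reason. If $p_2$ covered every $n$ that is a product of primes $\equiv 1\pmod 4$, it would in particular supply an explicit unit-fraction decomposition for each prime $n\equiv 1\pmod 4$ --- precisely the class where the Erd\H{o}s--Straus conjecture is currently open. Hence Conjecture~\ref{conjBilal} is at least as strong as that unresolved case, and no argument can close the surviving case without simultaneously resolving it. A realistic intermediate target is therefore to establish the divisor-existence condition for all $n$ outside a thin exceptional set (in the spirit of the density-one results of Elsholtz and Tao), and, as the paper proposes, to reduce the conjecture to a finite, explicitly checkable collection of residue classes modulo a suitable modulus.
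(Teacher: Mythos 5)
Your proposal is correct as far as it goes, but note that neither you nor the paper actually proves Conjecture~\ref{conjBilal}: it is stated as a conjecture, the paper supports it only by the partial coverage in Theorem~\ref{thm-0} plus machine verification up to $10^9$, and you yourself stop at an unproven divisor-existence statement. Within that caveat, your reduction is sound --- I checked that your rewriting $4p_2+1=z\bigl((4x-1)(4y-1)-1\bigr)-(4x-1)$ agrees with \eqref{eq-4P2}, that $q\in\operatorname{Im}(p_1)$ iff $4q+1$ has a prime factor $\equiv 3\pmod 4$, that $q\in\operatorname{Im}(p_4)$ iff $4q+1$ is an odd square, and that $p_3(1,y,z)=2y-1$ yields all odd $q$ --- and it takes a genuinely different route from the paper. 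The paper covers residue classes by explicit linear specializations: odd $q$ via $p_2(1,1,z)=2z-1$, $q=6c_1+4$ via $p_2(1+c_1,2,1)$, and $q=6c_1+2$ via $p_1(1+2c_1,1,1)$ (Theorem~\ref{thm-0}), leaving exactly the multiples of $6$ (Remark~\ref{rem2}) to computation. You instead stratify by the multiplicative type of $n=4q+1$, which isolates the structural obstruction --- even $q$ with $n$ a non-square built solely from primes $\equiv 1\pmod 4$ --- and converts membership in $\operatorname{Im}(p_2)$ into the clean condition $(ab-1)\mid(n+a)$ with $a,b\equiv 3\pmod 4$. What each buys: the paper's specializations are fully explicit and leave a single residue class to verify; yours explains \emph{why} the conjecture is hard (your hard core is exactly the class where Erd\H{o}s--Straus is open, the same point the paper makes via Theorems~\ref{thm-2} and~\ref{thm-3}), but the two unproven sets are incomparable rather than yours being smaller: $q=16$, $n=65=5\cdot 13$, lies in your hard core yet is covered by the paper's identity $p_2(3,2,1)=16$, while $q=6$, $n=25$, is left to computation by the paper yet dispatched by your $p_4$. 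One small correction: your claim that the survivors \emph{demand} $p_2$ is only forced when $n$ is prime; composite survivors can also fall to $p_3$ with $x\ge 2$, e.g.\ $65=(4\cdot 4-3)(8\cdot 1-3)$ gives $q=16=p_3(4,1,z)$, so routing everything through $p_2$ is sufficient for your plan but not necessary.
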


\begin{theorem}\label{thm-0}
The following statements are holds:
\begin{enumerate}
\item [(a)] Polynomials $p_i,$ $i\in\{1,2,3,4\}$, cover all odd natural numbers.
\item [(b)] Polynomials $p_i,$ $i\in\{1,2,3,4\}$, covers even numbers of the form \( 6c_1 + 4 \)
and \( 6c_1 + 2 \), \( c_1 \geq 0 \).
\end{enumerate}		
\end{theorem}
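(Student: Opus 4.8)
The plan is to prove each statement by exhibiting, for every targeted residue class, a one-parameter linear subfamily obtained by freezing two of the three variables of a single $p_i$, and then checking that the resulting affine function of the remaining variable is a bijection onto that class as its argument ranges over $\mathbb{N}^*$. Since $p_4(x,y,z)=x(x-1)$ is always even and $p_3$ contributes only sparse values, I expect $p_2$ alone to deliver part (a) and $p_1,p_2$ together to deliver part (b); the polynomials $p_3,p_4$ are reserved for the multiples of $6$ that this theorem deliberately does not address.

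For part (a), I would specialize $p_2$ at $y=z=1$. A direct substitution into \eqref{p2} gives $p_2(x,1,1)=x(4-1-1)-1=2x-1$. As $x$ runs through $1,2,3,\dots$, the value $2x-1$ runs through $1,3,5,\dots$, i.e.\ every odd natural number exactly once. Hence the union of the four images already contains all odd naturals, and (a) follows with no further work.

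For part (b) I would split into the two residue classes. To capture $6c_1+2$, I would use $p_1$ at $y=z=1$, where $p_1(x,1,1)=x(4-1)-1=3x-1$; restricting to odd $x=2c_1+1$ yields $3(2c_1+1)-1=6c_1+2$, which sweeps $2,8,14,\dots$ as $c_1\ge 0$. To capture $6c_1+4$, I would instead use $p_2$ at $x=z=1$, where $p_2(1,y,1)=1\cdot(4y-1-1)-y=3y-2$; restricting to even $y=2c_1+2$ yields $3(2c_1+2)-2=6c_1+4$, sweeping $4,10,16,\dots$. Together these two specializations produce exactly the even numbers not divisible by $6$, establishing (b).

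The calculations themselves are mechanical, so the only genuinely non-routine step is the search for the correct specialization --- choosing which two variables to freeze, and at what values, so that the surviving polynomial collapses to an affine map with the right slope and offset modulo $6$. I expect the main point requiring care (rather than any real difficulty) to be the boundary bookkeeping: verifying that each family starts at the correct smallest member (the case $c_1=0$), that the parameter restrictions $x$ odd and $y$ even still range over all of $\mathbb{N}^*$ under the reparametrizations $x=2c_1+1$ and $y=2c_1+2$, and that the forced choices such as $z=1$ and $x=1$ respect the standing constraint $x,y,z\ge 1$. None of these present an obstacle; the deeper difficulty of covering the remaining class of multiples of $6$ is precisely what is left to $p_3$ and $p_4$ and is not claimed in this theorem.
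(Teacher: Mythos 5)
Your proposal is correct and takes essentially the same approach as the paper: both arguments freeze two variables to produce affine one-parameter subfamilies, using $p_2$ to get the odd numbers as $2\cdot(\text{parameter})-1$, $p_1$ specialized at $y=z=1$ on odd arguments to get $6c_1+2$, and a linear specialization of $p_2$ to get $6c_1+4$. The only differences are immaterial choices of which variable carries the parameter (e.g.\ the paper uses $p_2(1,1,z)=2z-1$ and $p_2(1+c_1,2,1)=6c_1+4$, while you use $p_2(x,1,1)=2x-1$ and $p_2(1,2c_1+2,1)=6c_1+4$).
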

\begin{proof}[{\bf Proof of statement \((a)\) of Theorem \ref{thm-0}}]
Clearly, by substituting \( x = 1 \) and \( y = 1 \) into the second polynomial, we obtain
\begin{equation}\label{t1}
p_2(1, 1, z) = 2z - 1,
\end{equation}
which generates all odd numbers as \( z \in \mathbb{N}^* \).
\end{proof}
\begin{proof}[{\bf Proof of statement \((b)\) of Theorem \ref{thm-0}}]
The polynomial \( p_2 \) covers even numbers of the form \( 6c_1 + 4 \) by taking \( x = 1 + c_1 \), \( y = 2 \), and \( z = 1 \). This yields:
\begin{equation}\label{t2}
p_2(1 + c_1, 2, 1) = 6c_1 + 4, \quad c_1 \geq 0.
\end{equation}
Likewise, the polynomial \( p_1 \) covers even numbers of the form \( 6c_1 + 2 \) by taking \( x = 1 + 2c_1 \), and \( y = z = 1 \). This gives:
\begin{equation}\label{t3}
p_1(1 + 2c_1, 1, 1) = 6c_1 + 2, \quad c_1 \geq 0.
\end{equation}
\end{proof}

\begin{remark}
\label{rem2}
What remains, for which Conjecture~\ref{conjBilal} holds, is to attempt to cover the numbers of the form 
\begin{equation}\label{t4}
p_i(x,y,z)=6c_1,~~c_1>0, \quad i=1,2,3,\text{ or }4.
\end{equation}
\end{remark}

We have computationally verified, by evaluating these polynomials over \( {\mathbb{N}^*}^3 \) using \textit{Mathematica}, that they generate all natural numbers up to \( q = 10^9+2 \). For the full \textit{Mathematica} implementation and computational verification, please refer to the Supplementary Section \ref{SEC.1}. In light of equations \eqref{t1}-\eqref{t3}, then the \textit{Mathematica} implementation will consider only even natural numbers of the form \eqref{t4}.

Under the assumption that Conjecture~\ref{conjBilal} holds, we establish the following two theorems.

\begin{theorem}
\label{thm-1}
Assuming Conjecture \ref{conjBilal} holds, then $4p_i+1,$ $i\in\{1,2,3,4\},$ are immediately generate all natural numbers of the form $4\,q+1,~q \geq 1$.
\end{theorem}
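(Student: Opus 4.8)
The plan is to observe that this theorem is an immediate consequence of Conjecture~\ref{conjBilal} together with the elementary fact that the affine map $q \mapsto 4q+1$ is a bijection from $\{q \in \mathbb{N}^* : q \geq 1\}$ onto $\{4q+1 : q \geq 1\}$. Conjecture~\ref{conjBilal} asserts that every natural number $q \geq 1$ is realized as $p_i(x,y,z)$ for some index $i \in \{1,2,3,4\}$ and some triple $(x,y,z) \in {\mathbb{N}^*}^3$; the substitution $a = 4p_i(x,y,z)+1$ then transfers this covering property directly onto the target set, with no intermediate machinery required.

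Concretely, I would fix an arbitrary target value $a = 4q+1$ with $q \geq 1$. By the assumed conjecture there exist an index $i$ and a triple $(x,y,z) \in {\mathbb{N}^*}^3$ with $p_i(x,y,z) = q$, whence $4p_i(x,y,z)+1 = 4q+1 = a$, exhibiting $a$ as a value of the family $4p_i+1$. Conversely, every value $4p_i(x,y,z)+1$ is by construction of the form $4q+1$ with $q = p_i(x,y,z) \geq 1$, so the four families land exactly in the intended set and produce nothing extraneous. The two inclusions together establish the claimed equality of sets.

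The hard part, if one can call it that, is purely bookkeeping: one should record that each $p_i$ takes values in $\mathbb{N}^*$ on ${\mathbb{N}^*}^3$ (so that $q \geq 1$ and hence $a \geq 5$), and that the images of the four polynomials collectively exhaust $\mathbb{N}^*$. But the latter is precisely the statement of Conjecture~\ref{conjBilal}, which we are permitted to assume, so no genuine mathematical obstacle survives. I would therefore present the argument as a single short paragraph, emphasizing that it is the bijectivity of $q \mapsto 4q+1$ that makes the implication tight in both directions and guarantees the transfer is lossless.
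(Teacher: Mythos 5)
Your proposal is correct and is essentially the paper's own argument: the paper's proof of Theorem~\ref{thm-1} is the single line that it ``follows directly from Conjecture~\ref{conjBilal},'' which is exactly the forward covering direction you spell out via the substitution \(a = 4p_i(x,y,z)+1\). One small caution on your extra ``nothing extraneous'' remark (not needed for the theorem): the claim that each \(p_i\) takes values in \(\mathbb{N}^*\) on \({\mathbb{N}^*}^3\) fails for \(p_4(x,y,z)=x^2-x\) at \(x=1\), where \(p_4=0\), so the reverse inclusion as you state it is slightly off, though this does not affect the covering statement actually being proved.
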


\begin{proof}[{\bf Proof of Theorem \ref{thm-1}}]
The proof follows directly from Conjecture~\ref{conjBilal}.
\end{proof}

\begin{theorem}\label{thm-2}
Assuming Conjecture~\ref{conjBilal} is true, any prime of the form \( 4q + 1,~q \geq 1\) must be expressible in the form \( 4p_2 + 1 \).
\end{theorem}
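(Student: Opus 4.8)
The plan is to argue by reduction on the generating polynomial. By Conjecture~\ref{conjBilal}, any $q \geq 1$ with $4q+1$ prime is the value $p_i(x,y,z)$ of at least one of the four polynomials; I would show that $p_1$, $p_4$, and (generically) $p_3$ force $4q+1$ to factor nontrivially, so the prime case is pushed onto $p_2$, with a single boundary subcase of $p_3$ handled by a direct reduction back to $p_2$.

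First I would record the three algebraic identities that drive the argument, each obtained by direct expansion from \eqref{p1}, \eqref{p3}, and \eqref{p4}:
\begin{align*}
4\,p_1(x,y,z)+1 &= (4x-1)(4yz-1), \\
4\,p_3(x,y,z)+1 &= (4x-3)(8y-3), \\
4\,p_4(x,y,z)+1 &= (2x-1)^2.
\end{align*}
I would then run the case analysis. Suppose $4q+1$ is prime and fix $i$ together with $(x,y,z) \in {\mathbb{N}^*}^3$ so that $q = p_i(x,y,z)$. If $i=1$, then for $x,y,z \geq 1$ both factors $4x-1$ and $4yz-1$ are at least $3$, so $4q+1$ is composite, contradicting primality. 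If $i=4$, then $q = x^2-x \geq 1$ forces $x \geq 2$, whence $4q+1 = (2x-1)^2 \geq 9$ is a proper square, again composite. If $i=3$ with $x \geq 2$, then $4x-3 \geq 5$ and $8y-3 \geq 5$, so $4q+1$ is once more composite. In each of these cases the prime hypothesis is violated, so the only surviving possibilities are $i=2$ — which is exactly the desired conclusion — and the boundary case $i=3$ with $x=1$.

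That remaining subcase is the only genuinely delicate point, and it is where I expect the apparent obstacle to sit, though it dissolves on inspection: when $i=3$ and $x=1$ the factorization degenerates to $4q+1 = 8y-3$, which can itself be prime, so compositeness yields no contradiction. Here I would instead evaluate $q = p_3(1,y,z) = 2y-1$ and invoke the identity $p_2(1,1,t) = 2t-1$ from \eqref{p2} (which already underlies \eqref{t1}); taking $t=y$ gives $p_2(1,1,y) = 2y-1 = q$, so $q$ lies in the image of $p_2$ after all. Combining the cases, whenever $4q+1$ is prime the value $q$ is expressible as $p_2(x,y,z)$, hence $4q+1 = 4p_2(x,y,z)+1$, as claimed. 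The proof is short once the three factorizations are spotted; the only real care needed is tracking the unit factor $4x-3=1$ at $x=1$ and the exclusion $x\geq 2$ forced by $q\geq 1$ in the $p_4$ case.
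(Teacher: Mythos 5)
Your proposal is correct and follows essentially the same route as the paper's own proof: the factorizations $4p_1+1=(4x-1)(4yz-1)$, $4p_3+1=(4x-3)(8y-3)$, $4p_4+1=(2x-1)^2$ rule out primality except for $p_2$ and the degenerate subcase $x=1$ of $p_3$, which both you and the paper resolve by observing that $8y-3$ already occurs among the values $4p_2+1$ (you via $p_2(1,1,y)=2y-1$, the paper via $p_2(x,1,1)=2x-1$). Your write-up is in fact slightly more careful than the paper's, since you justify compositeness by bounding both factors and explicitly use $q\geq 1$ to exclude $x=1$ in the $p_4$ case.
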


\begin{proof}[{\bf Proof of Theorem \ref{thm-2}}]
  Based on Theorem~\ref{thm-1}, any natural number of the form \(4q+1\) can be written the following identities for some values \( x, y, z \in \mathbb{N}^* \):
\begin{align}
4p_1(x, y, z)+1 &= (4\, x-1)\, (4\,y\, z-1), \label{eq-4P1} \\
4p_2(x, y, z)+1 &= (4\, x-1)\, (4\,y\, z-1)-4\,x\,z, \label{eq-4P2} \\
4p_3(x, y, z)+1 &= (8\, y-3)\, (4\, x-3), \label{eq-4P3} \\
4p_4(x, y, z)+1 &= (2\,x-1)^2. \label{eq-4P4}
\end{align}
It is clear that any number of the form \( 4p_i + 1 \), for \( i \in \{1, 4\} \), is composite.
Moreover, \( 4p_3 + 1 \) is also composite whenever \( x > 1 \).
If \( x = 1 \), then \[ 4p_3 + 1 = 8y - 3, \] with \( y \geq 1 \). This expression coincides
with \( 4p_2 + 1 \) when \( z = 1 \), \( y = 1 \), and \( x \geq 1 \).
Therefore, any prime of the form \( 4p_i + 1 \) must originate from the form~\eqref{eq-4P2},
that is, it must be expressible as \( 4p_2 + 1 \).

This completes the proof, assuming the truth of Conjecture~\ref{conjBilal}.
\end{proof}

This observation suggests that primes of the form \( 4q + 1 \) 
follow a structured pattern governed by a simple algebraic polynomial.

Based on equations \eqref{t1}-\eqref{t3}, and relying on Theorem~\ref{thm-2}, 
we can express all prime numbers of the form \( 4q + 1 \) in the form \( 4p_2 + 1 \). 
This is due to the fact that primes of the form \( 4q + 1 \) require \( q = 2c_1 - 1 \) 
or \( q = 6c_1 + 4 \), which correspond to \eqref{t1} and \eqref{t2}, respectively. 
What remains is to verify computationally that the expression \( 4p_2 + 1 \) 
also covers all prime numbers for \( q = 6c_1 \), that is, 
all primes of the form \( 4q + 1 = 24c_1 + 1 \).

To isolate the primes of the form \( p \equiv 1 \pmod{4} \), 
we consider the polynomial \( 4p_2 + 1 \). Using \textit{Mathematica}, 
we have shown that this polynomial successfully generates all prime 
numbers of the form \( 4q + 1 \) up to \( 4 \times (10^9 + 2) + 1 \); 
see Supplementary Section~\ref{SEC.1}. The full \textit{Mathematica} 
implementation and computational procedure used to extend this 
verification up to \( q = 3 \times 10^9 \)---corresponding 
to primes \( a = 4q + 1 \approx 1.2 \times 10^{10} \)---are 
given in Supplementary Section~\ref{SEC.2}.

Although the polynomial also produces some composite values, 
it captures every prime of the form \( 4q + 1 \) within the tested range. 
Furthermore, the computation can be restricted to values of the form 
\( q = 6c_1 \) without loss of generality, as all primes of the form 
\( 4q + 1 \) are accounted for precisely when \( q \not\equiv 0 \pmod{6} \).

\subsection{Polynomial Generation of the {\bf Erd\H{o}s--Straus conjecture} for \(4q+1\)}

All the polynomials \( p_1, p_2, p_3, \) and \( p_4 \), 
defined by equations \eqref{p1}--\eqref{p2}, satisfy the Erd\H{o}s--Straus Conjecture~\ref{DiaphEqtn} 
when evaluated at integers of the form \( 4q + 1 \), especially the prime ones. 
Therefore, we present the following result, which establishes a proof of 
the Erd\H{o}s--Straus Conjecture~\ref{DiaphEqtn} based on Theorems~\ref{thm-1} and~\ref{thm-2}.

\begin{theorem}\label{thm-3}
The Erd\H{o}s--Straus Conjecture~\ref{DiaphEqtn} holds for every natural number \( a = 2q \) or \( a = 4q - 1 \), where \( q \geq 1 \). Moreover, if Conjecture \ref{conjBilal} holds, then for every natural number of the form \( 4q + 1 \), Conjecture~\ref{DiaphEqtn} also holds. In particular, it holds for every prime number of this form.
\end{theorem}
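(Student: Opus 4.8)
The plan is to treat the three residue classes of $a$ separately, combining the explicit identities already recorded with the conditional covering supplied by Conjecture~\ref{conjBilal}, and then to read off the prime statement as a corollary.

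First I would dispose of the two unconditional cases. For $a = 2q$ I would split according to whether $a \equiv 0$ or $a \equiv 2 \pmod 4$ and substitute directly into the two identities of~\eqref{even}, checking that each of the three displayed denominators is a positive integer for every $q \geq 1$ (and, if one insists on distinct denominators, confirming the three differ). For $a = 4q-1$ I would write $a = 4(q-1)+3$ and invoke~\eqref{odd1} with parameter $q-1 \geq 0$, again verifying positivity and integrality. Both steps are purely computational and require no hypothesis.

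Next, for $a = 4q+1$ I would invoke Conjecture~\ref{conjBilal} to write $q = p_i(x,y,z)$ for some $i \in \{1,2,3,4\}$ and some $x,y,z \in \mathbb{N}^*$. When $i \in \{1,2,3\}$ each polynomial was constructed precisely so that the associated triple $(\kappa,z,s)$ satisfies the integrality requirement~\eqref{kzs-cond} --- through~\eqref{c1},~\eqref{c2}, or~\eqref{c3} respectively --- so that substituting these parameters into the master decomposition~\eqref{kzs-decomp} produces three unit fractions with positive integer denominators, i.e. the desired explicit solution. The delicate subcase is $i=4$, where $4q+1 = (2x-1)^2$ by~\eqref{eq-4P4} and the identity~\eqref{kzs-decomp} does not apply.

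To handle $i=4$ I would combine the multiplicative reduction recalled in the introduction with induction on $a$. Since $q = x^2 - x \geq 1$ forces $x \geq 2$, the factor $2x-1$ is an odd integer strictly smaller than $(2x-1)^2 = a$, so I would reduce $\tfrac{4}{(2x-1)^2}$ to $\tfrac{4}{2x-1}$ by scaling every denominator by $2x-1$; if $2x-1 \equiv 3 \pmod 4$ the smaller instance is solved unconditionally by~\eqref{odd1}, while if $2x-1 \equiv 1 \pmod 4$ it is a strictly smaller integer of the form $4q'+1$ and the inductive hypothesis applies, so the descent is well-founded. Finally, the ``in particular'' clause is immediate from Theorem~\ref{thm-2}: a prime $a = 4q+1$ must arise as $4p_2+1$, which is exactly the case $i=2$ already resolved explicitly. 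I expect the genuine obstacle to lie entirely in the $i=4$ case --- one must be sure the descent terminates and that no odd perfect square slips past both the reduction and the covering, precisely because the master identity~\eqref{kzs-decomp} was never designed to represent values of the form $(2x-1)^2$.
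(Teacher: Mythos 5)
Your proposal mirrors the paper's own proof almost exactly: the unconditional classes are settled by the identities \eqref{even} and \eqref{odd1}, the classes $q = p_i$ for $i \in \{1,2,3\}$ by substituting the constructed parameters into the master decomposition \eqref{kzs-decomp} (whose specializations are precisely the paper's explicit solutions \eqref{sol-1}--\eqref{sol-3}), and the square case $4q+1 = (2x-1)^2$ by the multiplicative reduction to $\tfrac{4}{2x-1}$. The only difference is one of rigor, in your favor: where the paper merely remarks that this last case ``reduces to'' a decomposition obtained from \eqref{odd1} or \eqref{sol-2}, your well-founded descent on $a$ makes explicit why that reduction terminates, which the paper leaves unstated.
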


\begin{proof}[{\bf Proof of Theorem \ref{thm-3}}]
The Erd\H{o}s--Straus Conjecture \ref{DiaphEqtn} 
holds for all integers 
of the form \( a = 2q \) or \( a = 4q - 1 \), 
where \( q \geq 1 \), as established by results 
\eqref{even} and \eqref{odd1}. 
What remains is to prove the conjecture for 
all numbers of the form \( 4q + 1 \). 
To achieve this, we invoke Theorem~\ref{thm-1}, 
which states that all polynomials of the form 
\( 4p_i + 1 \), 
with \( i \in \{1, 2, 3, 4\} \), generate all 
natural numbers of the form \( 4q + 1 \), 
for \( q \geq 1 \). Based on the decomposition given in \eqref{kzs-decomp}, 
the breakdown for each of the four cases is as follows: \\

\noindent \textbf{Case 1:} \( q = p_1\Rightarrow 4p_1+1 = (4 x-1) (4y z-1)\). Decomposition~\eqref{DiaphEqtn} in this case is given as follows:
\begin{equation}
\label{sol-1}
\begin{aligned}
\frac{4}{a(x,y,z)} &:= \frac{4}{4p_1+1}=\frac{4}{(4 x-1) (4 y z-1)} \\
&=\frac{1}{b_1(x,y,z)}+\frac{1}{c_1(x,y,z)}+\frac{1}{d_1(x,y,z)},
\end{aligned}
\end{equation}
where
\begin{equation}
\label{b-1-1}
\begin{aligned}
\displaystyle
b_1(x,y,z)&=(4 x-1) y z (4 y z+z-1), \\
&= 16 x y^2 z^2+4 x y z^2-4 x y z-4 y^2 z^2-y z^2+y z,
\end{aligned}
\end{equation}
%%%%%%%%%%%%%%%%%
\begin{equation}
\label{c-1-1}
c_1(x,y,z)=(4 x-1) y z= -y z + 4 x y z,
\end{equation}
and
\begin{equation}
\label{d-1-1}
\begin{aligned}
d_1(x,y,z)&=(4 x-1) y (4 y z-1) (4 y z+z-1), \\
&= -y + 4 x y + y z - 4 x y z + 8 y^2 z - 32 x y^2 z \\
& - 4 y^2 z^2 + 16 x y^2 z^2 - 16 y^3 z^2 + 64 x y^3 z^2.
\end{aligned}
\end{equation}

\noindent \textbf{Case 2:} \( q = p_2 \Rightarrow 4p_2+1= (4\, x-1) (4y z-1)-4xz\). Decomposition~\eqref{DiaphEqtn} of the case may be given as follows:
\begin{equation}
\label{sol-2}
\begin{aligned}
\frac{4}{a(x,y,z)} &:= \frac{4}{4p_2+1}=\frac{4}{(4 x-1) (4 y z-1)-4 x z} \\
&= \frac{1}{b_2(x,y,z)}+\frac{1}{c_2(x,y,z)}+\frac{1}{d_2(x,y,z)},
\end{aligned}
\end{equation}
where
\begin{equation}
\label{b-1-2}
b_2(x,y,z)=z (4 x y-x-y)= -x z - y z + 4 x y z,
\end{equation}
%%%%%%%%%%%%%%%%%
\begin{equation}
\label{c-1-2}
\begin{aligned}
c_2(x,y,z)&=(4 x y-x-y) ((4 y-1) z-1) ((4 x-1) (4 y z-1)-4 x z), \\
&= x - 4 x^2 + y - 8 x y + 16 x^2 y + x z - 8 x^2 z + y z - 20 x y z \\
& + 64 x^2 y z - 8 y^2 z  + 64 x y^2 z - 128 x^2 y^2 z - 4 x^2 z^2 - 
 8 x y z^2 \\
& + 48 x^2 y z^2 - 4 y^2 z^2 + 64 x y^2 z^2 - 
 192 x^2 y^2 z^2 + 16 y^3 z^2 \\
& - 128 x y^3 z^2 + 256 x^2 y^3 z^2,
\end{aligned}
\end{equation}
and
\begin{equation}
\label{d-1-2}
\begin{aligned}
d_2(x,y,z)&=z (4 x y-x-y) ((4 y-1) z-1),\\
&= 16 x y^2 z^2-8 x y z^2-4 x y z+x z^2+x z-4 y^2 z^2+y z^2+y z.
\end{aligned}
\end{equation}

\noindent \textbf{Case 3:} \( q = p_3\Rightarrow 4p_3+1 = (8 y-3)(4 x-3)\). In this case, Decomposition~\eqref{DiaphEqtn} is presented as follows:
\begin{equation}
\label{sol-3}
\begin{aligned}
\displaystyle
\frac{4}{a(x,y,z)} &:= \frac{4}{4p_3+1}=\frac{4}{(4 x-3) (8 y-3)} \\
&= \frac{1}{b_3(x,y,z)}+\frac{1}{c_3(x,y,z)}+\frac{1}{d_3(x,y,z)},
\end{aligned}
\end{equation}
where
\begin{equation}
\label{b-1-3}
b_3(x,y,z)=(4 x-3) (3 y-1)=3 - 4 x - 9 y + 12 x y,
\end{equation}
%%%%%%%%%%%%%%%%%
\begin{equation}
\label{c-1-3}
c_3(x,y,z)=2 (4 x-3) (3 y-1)=6 - 8 x - 18 y + 24 x y,
\end{equation}
and
\begin{equation}
\label{d-1-3}
\begin{aligned}
d_3(x,y,z)&=(4 x-3) (6 y-2) (8 y-3),\\
&=192 x y^2-136 x y+24 x-144 y^2+102 y-18.
\end{aligned}
\end{equation}

Now, in the following final case, the decomposition is not obtained from \eqref{kzs-decomp}, but is instead based on the previous three cases.

\noindent \textbf{Case 4:} \( q = p_4 \Rightarrow 4p_4 + 1 = (2x - 1)^2 \). In this case, the decomposition~\eqref{DiaphEqtn} of \( {4}/{a} \) reduces to finding a decomposition of \( {4}/{(2x - 1)} \). 

This expression corresponds to decomposition~\eqref{odd1} when \( 2x - 1 \) is of the form \( 4q - 1 \). If \( 2x - 1 \) is instead of the form \( 4q + 1 \), then the problem reduces to determining a decomposition of \( {4}/{p} \), where \( p \) is a prime of the form~\eqref{odd1} or~\eqref{sol-2}.

This concludes the proof.
\end{proof}

\section{Conclusion}

We have presented a novel algebraic framework for approaching 
the Erd\H{o}s--Straus conjecture through the use of multivariable 
polynomial parametrizations. Specifically, we introduced four 
explicit unbounded polynomials that collectively generate all 
integers \( a \equiv 1 \pmod{4} \) up to \( 4 \times (10^9 + 2) + 1 \), 
each paired with a corresponding solution to the Diophantine equation
\[
\frac{4}{a} = \frac{1}{b} + \frac{1}{c} + \frac{1}{d}.
\]
Furthermore, we identified a single polynomial among them that appears 
to generate all prime numbers of the form \( 4q + 1 \) within the 
the range \(\approx 1.2\times 10^{10} \), offering especially strong evidence in support of our method.

We conjecture that this four-polynomial system is sufficient to cover 
all integers congruent to \(1 \pmod{4}\), providing a constructive and 
potentially complete solution for the conjecture. 
These findings offer a promising new direction—both computational and 
theoretical—for advancing toward a resolution of the Erd\H{o}s--Straus 
conjecture.

\bibliographystyle{plain}

\newpage
\section{Supplementary Section for Review Only:\\ \textcolor{blue}{\textit{Mathematica} implementation}}
\subsection{\textcolor{blue}{Supplementary Section}}
\label{SEC.0}
We consider the program given in Supplementary Section~\ref{SEC.1},  
with the only difference that we set \texttt{baseStep=1}.  
This program produces 
\begin{Verbatim}[fontsize=\small]
---------------------(* Mathematica Output *)}---------------------
qStart = 1, 
qMax = 1000000, 
step = 1
Batch size (number of q 
values per batch) = 1000000
Batch 1: q in [1, 1000000]
Unsolved q in batch 1: 0
Solutions found: 1000000
Time: 674.25 sec
   p2: 646487
   p1: 346519
   p3: 6919
   p4: 75
Global unsolved q saved (CSV).
All batches complete.
\end{Verbatim}
And a file named \texttt{results\_batch1.csv}, which contains values of~$q$ in the range $[1,10^6]$.  
In this range, we obtain:
\begin{itemize}
  \item $p_1$: generates 346{,}519 values of $q$,
  \item $p_2$: generates 646{,}487 values of $q$,
  \item $p_3$: generates 6{,}919 values of $q$,
  \item $p_4$: generates 75 values of $q$.
\end{itemize}

To extract and display the list of all values of $q$ generated 
by $p_1$, $p_2$, $p_3$, or by $p_4$ from the CSV file, use the 
following Python code:

\begin{lstlisting}[style=mypython]
(* Python code *)

import pandas as pd
from tkinter import Tk
from tkinter.filedialog import askopenfilename

# Open file dialog to select CSV
Tk().withdraw()
input_file = askopenfilename(title="Select CSV file", 
filetypes=[("CSV files", "*.csv")])
if not input_file:
    raise Exception("No file selected!")

# Read the CSV file
df = pd.read_csv(input_file, header=None)

# Check that the CSV has at least 5 columns
if df.shape[1] < 5:
    raise ValueError("CSV must have at 
    least 5 columns.")

# Get q values (1st column) grouped by p values in 
# 5th column (index 4)
p_types = ['p1', 'p2', 'p3', 'p4']
q_dict = {}

for p in p_types:
    rows = df[df[4] == p]
    q_list = rows[0].tolist()
    q_dict[p] = q_list

    # Save to CSV
    filename = f'q_with_{p}.csv'
    pd.Series(q_list).to_csv(filename, index=False, 
    header=False)

    # Print to console
    print(f" q values with {p}:", q_list)
\end{lstlisting}

This code will produce four separate \texttt{.csv} files, each containing the corresponding $q$ values:
\begin{itemize}
  \item For $p_1$: \texttt{q\_with\_p1.csv}
  \item For $p_2$: \texttt{q\_with\_p2.csv}
  \item For $p_3$: \texttt{q\_with\_p3.csv}
  \item For $p_4$: \texttt{q\_with\_p4.csv}
\end{itemize}

\subsection{\textcolor{blue}{Supplementary Section}}
\label{SEC.1}
This program verifies that the polynomials \( p_1 \), \( p_2 \), \( p_3 \), and \( p_4 \) 
collectively cover all natural numbers of the form \( 6c_1 \), for \( c_1 \geq 1 \), 
starting from \( q = 6 \) up to \( q = \text{qValue} \). The search is initially performed over the small range $\{1,2,3\}$ for the variables $x$, $y$, or $z$. 
If no solution is found, the algorithm proceeds to loop over the wider range
\(
1 \leq x \leq \frac{1 + \sqrt{4q + 1}}{2}.
\)
\begin{lstlisting}
 ---------------(* Mathematica Input *)}---------------
baseStep = 6;
nStart = 1;
qStart = baseStep nStart;
qMax = 10^6;
batchSizeQ = 10^6;
Print[
  "qStart = ", qStart, ", 
qMax = ", qMax, ", 
step = ", baseStep
  ];
Print[
  "Batch size (number of q 
values per batch) = ",
  batchSizeQ
  ];
notebookDir = NotebookDirectory[];
If[
  notebookDir === Null,
  Print[
   "Please save the notebook 
first before running the code."
   ];
  Abort[];
  ];
qAll = Range[qStart, qMax, baseStep];
totalQ = Length[qAll];
batchCount = Ceiling[
   totalQ/batchSizeQ
   ];
ClearAll[processSingleQ]
processSingleQ[q_] := Module[
   {xmax, eqs, inst, vars,
    conds, pi, xVal, yVal,
    zVal, eq, results = {},
    found = False},
   Catch[
    Do[
     eqs = {-x + (-1 + 4 x) y z == q,
       -x + (-x + (-1 + 4 x) y) z == q,
       -1 + 2 y + (-1 + x) (-3 + 8 y) == q};
     eq = eqs[[i]];
     Do[
      Do[
       Do[
        If[
         TrueQ[
          eq /. {x -> xVal, y -> yVal, z -> zVal}
          ],
         pi = Switch[
           i, 1, "p1", 2, "p2", 3, "p3"
           ];
         {yPrint, zPrint} =
          If[
           pi == "p3", {yVal, ""},
           {yVal, zVal}
           ];
         AppendTo[
          results,
          {q, xVal, yPrint, zPrint, pi}
          ];
         found = True;
         Throw[Null];
         ],
        {zVal, 1, 3}
        ],
       {yVal, 1, 3}
       ],
      {xVal, 1, 3}
      ],
     {i, 1, 3}
     ]
    ];
   If[
    found, Return[results]
    ];
   xmax = Floor[1/2 (Sqrt[4 q + 1] + 1)];
   Catch[
    For[
     x = 1, x <= xmax, x++,
     eqs = {-x + (-1 + 4 x) y z == q,
       -x + (-x + (-1 + 4 x) y) z == q,
       -1 + 2 y + (-1 + x) (-3 + 8 y) == q};
     Do[
      eq = eqs[[i]]; vars = If[i == 3, {y}, {y, z}]; 
      conds = If[i == 3, y > 0, y > 0 && z > 0]; 
      inst = Quiet[FindInstance[
         eq && conds, vars, Integers, 1]];
      If[
       Length[inst] > 0,
       pi = Switch[
         i, 1, "p1", 2, "p2", 3, "p3"
         ];
       yVal = y /. inst[[1]];
       zVal = If[
         pi == "p3", "", z /. inst[[1]]
         ];
       AppendTo[
        results, {q, x, yVal, zVal, pi}
        ];
       found = True; Throw[Null];
       ],
      {i, 1, 3}
      ]
     ]
    ];
   If[found, Return[results]];
   For[
    x = 1, x <= xmax && ! found, x++,
    If[
      (-1 + x) x == q,
      AppendTo[
       results, {q, x, "", "", "p4"}
       ];
      found = True; Break[];
      ];
    ];
   results
   ];
LaunchKernels[];
allUnsolvedQ = {};
Do[
  qBatch =
   Take[
    qAll, {(b - 1) batchSizeQ + 1,
     Min[b batchSizeQ, totalQ]}
    ];
  If[
   qBatch === {}, Break[]
   ];
  qBatchMin = First[qBatch];
  qBatchMax = Last[qBatch];
  Print[
   "Batch ", b,
   ": q in [", qBatchMin, ", ",
   qBatchMax, "]"
   ];
  {timeBatch, resultsList} =
   AbsoluteTiming[
    ParallelMap[processSingleQ, qBatch]
    ];
  flat = Flatten[resultsList, 1];
  sorted = SortBy[flat, First];
  final =
   Prepend[
    sorted, {"q", "x", "y", "z", "pi"}
    ];
  Export[
   FileNameJoin[{notebookDir,
     "results_batch" <> ToString[b] <> ".csv"}],
   final];
  qSolved = DeleteDuplicates[flat[[All, 1]]]; 
  qUnsolved = Complement[qBatch, qSolved]; 
  AppendTo[allUnsolvedQ, qUnsolved];
  Export[
   FileNameJoin[{notebookDir,
     "unsolved_batch" <> ToString[b] <> ".csv"}],
   Prepend[qUnsolved, "q"]
   ];
  Print["Unsolved q in batch ", b, ": ",
   Length[qUnsolved]];
  piCounts = Tally[flat[[All, 5]]];
  Print[
   "Solutions found: ", Length[flat]
   ];
  Print[
   "Time: ",
   NumberForm[timeBatch, {6, 2}],
   " sec"
   ];
  Do[
   Print[
    "   ", pi[[1]], ": ", pi[[2]]
    ],
   {pi, piCounts}
   ];, {b, 1, batchCount}
  ];
KillKernels[];
qUnsolvedAll =
  DeleteDuplicates[
   Flatten[allUnsolvedQ]
   ];
Export[
  FileNameJoin[
   {notebookDir, "unsolved_all.csv"}],
  Prepend[qUnsolvedAll, "q"]];
Print["Global unsolved q saved (CSV)."];
Print["All batches complete."];
\end{lstlisting}

\begin{Verbatim}[fontsize=\small]
---------------------(* Mathematica Output *)}---------------------
qStart = 6, 
qMax = 1000000, 
step = 6
Batch size (number of q 
values per batch) = 1000000
Batch 1: q in [6, 999996]
Unsolved q in batch 1: 0
Solutions found: 166666
Time: 228.17 sec
   p3: 6919
   p1: 13187
   p2: 146485
   p4: 75
Global unsolved q saved (CSV).
All batches complete.
\end{Verbatim}

\noindent ``Solutions found: 100{,}000'' means that there are 100,000 
values of \( q \) for which each \( q \) is covered by at least one of 
the polynomials \( p_i \), where \( i \in \{1, 2, 3, 4\} \).\\
``Unsolved \( q \) in batch 1: 0'' means that there is no value of 
\( q \) in batch 1 that is not covered by any of the polynomials 
\( p_i \), where \( i \in \{1, 2, 3, 4\} \).

\subsection{\textcolor{blue}{Supplementary Section}}
\label{SEC.2}
Since the polynomial \( 4p_2 + 1 \) already covers all primes of the form \( 4q + 1 \) for \( q \not= 6c_1 \), it remains to verify that it also covers the case when \( q = 6c_1 \). The following program confirms that the polynomial \( 4p_2 + 1 \) generates all prime numbers of the form \(a = 4q + 1 \) with \( q = 6c_1 \), starting from \( q = 6 \) up to \( q = \text{qMax} \), by searching over the range
\(
1 \leq x \leq \frac{1 + \sqrt{a}}{2}.
\)
\begin{lstlisting}
 ---------------(* Mathematica Input *)}---------------
baseStep = 6;
nStart = 1;
qStart = baseStep*nStart;
qMax = 10^6;
batchSizeQ = 10^6;
cpuCores = $ProcessorCount;

j = Which[cpuCores <= 2, 4, cpuCores <= 4, 8, cpuCores <= 6, 12, 
   cpuCores <= 8, 16, cpuCores <= 12, 24, True, 3 cpuCores];

notebookDir = NotebookDirectory[];
If[notebookDir === Null, 
  Print["Please save the notebook first before running the code."];
  Abort[]];

resultsDir = FileNameJoin[{notebookDir, "Results"}];
If[! DirectoryQ[resultsDir], 
  CreateDirectory[resultsDir, CreateIntermediateDirectories -> True]];

validateSolution[q_, x_, y_, z_] := (4 x - 1) (4 y z - 1) - 4 x z == 
   4 q + 1;

findSolutionForQ[q_] := 
  Module[{a = 4 q + 1, xmax, solution = None, sol, x, y, z}, 
   If[PrimeQ[a], xmax = Floor[1/2 (Sqrt[a] + 1)];
    (*Step 1:Try x=1,2,3;solve for y,z*)
    Do[Quiet@
      Check[sol = 
        FindInstance[(4 x0 - 1) (4 y z - 1) - 4 x0 z == a && y > 0 && 
          z > 0, {y, z}, Integers, 1];
       If[sol =!= {}, {y, z} = {y, z} /. First[sol];
        If[validateSolution[q, x0, y, z], solution = {q, x0, y, z};
         Break[]]], None], {x0, 1, 3}];
    (*Step 2:Try y=1,2,3;solve for x,z*)
    If[solution === None, 
     Do[Quiet@
       Check[sol = 
         FindInstance[(4 x - 1) (4 y0 z - 1) - 4 x z == a && x > 0 && 
           z > 0, {x, z}, Integers, 1];
        If[sol =!= {}, {x, z} = {x, z} /. First[sol];
         If[validateSolution[q, x, y0, z], solution = {q, x, y0, z};
          Break[]]], None], {y0, 1, 3}]];
    (*Step 3:Try z=1,2,3;solve for x,y*)
    If[solution === None, 
     Do[Quiet@
       Check[sol = 
         FindInstance[(4 x - 1) (4 y z0 - 1) - 4 x z0 == a && x > 0 &&
            y > 0, {x, y}, Integers, 1];
        If[sol =!= {}, {x, y} = {x, y} /. First[sol];
         If[validateSolution[q, x, y, z0], solution = {q, x, y, z0};
          Break[]]], None], {z0, 1, 3}]];
    (*Step 4:x from 4 to xmax,solve for y,z*)
    If[solution === None, 
     Do[Quiet@
       Check[sol = 
         FindInstance[(4 x0 - 1) (4 y z - 1) - 4 x0 z == a && y > 0 &&
            z > 0, {y, z}, Integers, 1];
        If[sol =!= {}, {y, z} = {y, z} /. First[sol];
         If[validateSolution[q, x0, y, z], solution = {q, x0, y, z};
          Break[]]], None], {x0, 4, xmax}]];];
   solution];

processBatch[qBatchMin_, qBatchMax_] := 
  Module[{qValues = Range[qBatchMin, qBatchMax, baseStep], 
    solutions = {}}, 
   solutions = 
    ParallelMap[findSolutionForQ, qValues, 
     Method -> "FinestGrained"];
   DeleteCases[solutions, None]];

Print["CPU cores: ", cpuCores, ", using ", j, 
  " parallel subkernels"];
Print["qStart = ", qStart, ", qMax = ", qMax, ", step = ", baseStep];
Print["Batch size = ", batchSizeQ];

LaunchKernels[];
DistributeDefinitions[baseStep, j, findSolutionForQ, validateSolution,
   processBatch, cpuCores, qMax, batchSizeQ];

allSolutions = {};
allUnsolvedQ = {};
batchCount = Ceiling[(qMax - qStart + 1)/batchSizeQ];

Do[qBatchMin = qStart + batchSizeQ (b - 1);
  qBatchMin = 
   qBatchMin + 
    If[Mod[qBatchMin, baseStep] == 0, 0, 
     baseStep - Mod[qBatchMin, baseStep]];
  qBatchMax = Min[qBatchMin + batchSizeQ - 1, qMax];
  If[qBatchMin > qMax, Break[]];
  Print["\nProcessing batch ", b, "/", batchCount, ": q in [", 
   qBatchMin, ", ", qBatchMax, "]"];
  {timeBatch, batchSolutions} = 
   AbsoluteTiming[processBatch[qBatchMin, qBatchMax]];
  sortedSolutions = SortBy[batchSolutions, First];
  AppendTo[allSolutions, sortedSolutions];
  qAll = 
   Select[Range[qBatchMin, qBatchMax, baseStep], PrimeQ[4 # + 1] &];
  qSolved = sortedSolutions[[All, 1]];
  qUnsolved = Complement[qAll, qSolved];
  AppendTo[allUnsolvedQ, qUnsolved];
  batchResultsFile = 
   FileNameJoin[{resultsDir, 
     "results_batch" <> IntegerString[b, 10, 3] <> ".csv"}];
  unsolvedFile = 
   FileNameJoin[{resultsDir, 
     "unsolved_batch" <> IntegerString[b, 10, 3] <> ".csv"}];
  Export[batchResultsFile, 
   Prepend[sortedSolutions, {"q", "x", "y", "z"}]];
  Export[unsolvedFile, Prepend[qUnsolved, "q"]];
  Print["Solutions found: ", Length[sortedSolutions]];
  Print["Unsolved q: ", Length[qUnsolved]];
  Print["Time: ", NumberForm[timeBatch, {6, 2}], " sec"];
  Print["Batch ", b, " complete"];, {b, 1, batchCount}];

Print["\nProcessing complete!"];
Print["Total solutions found: ", Length[Flatten[allSolutions, 1]]];
Print["Total unsolved q: ", Length[Flatten[allUnsolvedQ]]];

Export[FileNameJoin[{resultsDir, "all_solutions.csv"}], 
  Prepend[Flatten[allSolutions, 1], {"q", "x", "y", "z"}]];
Export[FileNameJoin[{resultsDir, "all_unsolved.csv"}], 
  Prepend[Flatten[allUnsolvedQ], "q"]];

Print["All results saved to: ", resultsDir];
\end{lstlisting}

\begin{Verbatim}[fontsize=\small]
---------------------(* Mathematica Output *)}---------------------
CPU cores: 20, using 60 parallel subkernels
qStart = 6, qMax = 1000000, step = 6
Batch size = 1000000
Processing batch 1/1: q in [6, 1000000]
Solutions found: 35279
Unsolved q: 0
Time: 252.72 sec
Batch 1 complete
Processing complete!
Total solutions found: 35279
Total unsolved q: 0
All results saved to: a folder named ``Results'' located in the 
same directory as the ``.nb'' file.

\end{Verbatim}

\noindent ``Total solutions found: 35279'' means that the number of all values \( q \) 
for which \( 4q + 1 \) is a prime number and is covered by \( p_2 \) 
is 35,279.\\
``Total unsolved q: 0'' means that there is no value of \( q \) such that 
\( 4q + 1 \) is a prime not covered by \( p_2 \).\\

\noindent {\bf Remark.} The CSV files contain data up to $q=10^9+2$, 
where $q$ is a multiple of~6. For prime numbers of the form $4q+1$ 
up to $q=1.2\times 10^{10}$, with $q$ also a multiple of~6, these files can be 
provided to the referees upon request. We do not include them here 
due to their large size.

\end{document}